\theoremstyle{plain} 
\newtheorem{theorem}{Theorem}[section]
\newtheorem{lemma}[theorem]{Lemma}
\newtheorem{corollary}[theorem]{Corollary}
\theoremstyle{definition}
\newtheorem{assumptionDis}{Assumption}
\newtheorem*{assumption_iad}{Assumption (IAD)}
\newtheorem*{assumption_w}{Assumption (W)}
\newtheorem*{assumption_m}{Assumption (M)}
\newtheorem*{assumption_FVC}{Assumption (FVC)}
\newtheorem*{assumption_pos}{Assumption (Pos)}
\theoremstyle{remark}
\newtheorem{remark}[theorem]{Remark}
\renewcommand{\epsilon}{\varepsilon}
\renewcommand{\Im}{\operatorname{Im}}
\newcommand{\RR}{\mathbb{R}}
\newcommand{\NN}{\mathbb{N}}	
\newcommand{\ZZ}{\mathbb{Z}}
\newcommand{\BIGOP}[1]{\mathop{\mathchoice%
{\raise-0.22em\hbox{\huge $#1$}}%
{\raise-0.05em\hbox{\Large $#1$}}{\hbox{\large $#1$}}{#1}}}
\newcommand{\BIGboxplus}{\mathop{\mathchoice%
{\raise-0.35em\hbox{\huge $\boxplus$}}%
{\raise-0.15em\hbox{\Large $\boxplus$}}{\hbox{\large $\boxplus$}}{\boxplus}}}
\newcommand{\bigtimes}{\BIGOP{\times}}
\begin{document}
\title{Minami's estimate: beyond rank one perturbation and monotonicity}

\author{Martin Tautenhahn$^1$ and Ivan Veseli\'c$^1$}

\date{}

\footnotetext[1]{Technische Universit\"at Chemnitz, Fakult\"at f\"ur Mathematik, 09107 Chemnitz, Germany}

\maketitle
\begin{abstract}
In this note we prove Minami's estimate for a class of discrete alloy-type models with a sign-changing single-site potential of finite support. 
We apply Minami's estimate to prove Poisson statistics for the energy level spacing. 
Our result  is valid for random potentials which are
in a certain sense sufficiently close to the standard Anderson potential (rank one perturbations coupled with i.i.d.\ random variables). 
\end{abstract}
\section{Introduction}
This paper is devoted to the study of spectral properties, in particular eigenvalue statistics, of random Schr\"odinger operators. The specific model we are interested in is the \emph{discrete alloy-type model}, which is defined by the family of Schr\"odinger operators 
\begin{equation} \label{eq:hamiltonian}
 H_\omega = -\Delta + \lambda V_\omega , \quad \omega \in \Omega = \bigtimes_{k \in \mathbb{Z}^d} \mathbb{R}, \quad \lambda > 0 ,
\end{equation}
on $\ell^2 (\mathbb{Z}^d)$. Here, $\Delta$ denotes the discrete Laplacian and $V_\omega$ 
is the multiplication operator by the function
\begin{equation}
 \label{eq:potential}
V_\omega (x) = \sum_{k \in \mathbb{Z}^d} \omega_k u(x-k).  
\end{equation}
It is assumed that the projections $\Omega \ni \omega \mapsto \omega_k$, $k \in \mathbb{Z}^d$, called \emph{coupling constants}, are independent identically distributed (i.i.d.)\ random variables, and that $\omega_0$ is distributed according to some probability measure $\mu$ on $\mathbb R$ with compact support. The function $u : \mathbb{Z}^d \to \mathbb{R}$ is called \emph{single-site potential} and is assumed to be in $\ell^1 (\mathbb{Z}^d ; \mathbb R)$. Studying random Schr\"odinger operators is motivated from solid state physics: 
each configuration $\omega \in \Omega$ corresponds to a possible realization of the random medium, while the product measure on the space $\Omega$ describes the distribution of the individual realizations.
\par
Anderson \cite{Anderson-58} argued, that the solutions of the time-dependent Schr\"{o}dinger equation, the \emph{wave functions}, 
stay localized in space for all time, giving bound states, in certain disorder/energy regimes. 
This phenomenon is called \emph{localization}. 
It is in contrast to the situation encountered with periodic Schr\"odinger operators, 
where the wave functions spread in space  for time tending to infinity (scattering states). 
Mathematically, bound or localized states can be related via the celebrated RAGE-theorem
to intervals $I \subset \mathbb{R}$ in the almost sure spectrum of $H_\omega$, 
where the continuous spectrum of $H_\omega$ is empty for almost all $\omega \in \Omega$. 
There are also definitions of localization from the dynamical point of view, 
see  e.g.\ \cite{delRioJLS-96a} for an early and \cite{Klein-08} for a recent paper.
\par
If localization occurs in some interval $I \subset \mathbb{R}$ the entire spectrum in $I$ corresponds to eigenvalues, 
and their closure. Thus it is natural to ask about the distribution of the eigenvalues. 
Physicists expect that there is no level repulsion of the energy states in the localized regime; 
in fact, that the eigenvalues are distributed independently on the interval $I$, cf.~e.g.~\cite{DisertoriR-08,Efetov-97}.
\par
The first precise result in this direction, namely that the point process associated to rescaled eigenvalues converges to a Poisson process 
has been obtained by S.~Molchanov in \cite{Molchanov-81}.
It concerns a one-dimensional continuum random Schr\"odinger operator on $L^2(\RR)$ 
and relies in its proofs on the papers \cite{GoldsheidMP-77} and \cite{Molchanov-78a}.
An associated central limit theorem was proven in \cite{Reznikova-81a}.
\par
For the purposes of this paper a result \cite{Minami-96} of  Minami 
will be more relevant, where he proved the analogous result for the \emph{i.i.d.\ Anderson model}, 
i.e.\ the operator described in \eqref{eq:hamiltonian} and \eqref{eq:potential}
in the case where $u = \delta_0$ and the measure $\mu$ has a bounded density.
The key feature of Minami's proof is the so-called Minami estimate 
given in  Ineq.~\eqref{eq:minamiiid} below.
\par
Subsequently, Minami's result has stimulated further research in this direction. For example, in \cite{Nakano-06} and \cite{KillipN-07} the authors study the joint distribution of energy levels and localization centers of eigenfunctions. Interestingly, they use the bounds from \cite{Minami-96} as a key tool for their analysis, while in the approach of Molchanov \cite{Molchanov-81, Molchanov-78a} quantitative estimates about the exponential decay of eigenfunctions around localization centers is one step in the proof of asymptotic Poisson statistics of rescaled eigenvalues. 
\par
The papers \cite{GrafV-07,BellissardHS-07} prove Minami's estimate 
for more general background operators than the nearest neighbor Laplacian 
but still for an i.i.d.\ potential, i.e.\ in the case where $u=\delta_0$. In \cite{CombesGK-09} it was shown that one can treat 
random coupling constants $\omega_j$ with H\"older continuous distribution $\mu_j$ as well.
\par
Germinet and Klopp continued the research on spectral statistics 
for random Schr\"odinger operators in \cite{GerminetK-11b}. 
In particular, they give an abstract result that a Wegner estimate and a weakened Minami estimate imply Poisson statistics for a large class of discrete 
random Schr\"odinger operators. 
They also weaken the hypothesis on the existence and positivity of the DOS by 
assuming only a quantified positivity of the density of states measure, 
cf.\ Assumption (Pos) in Section~\ref{sec:poisson}. 
Consistently with this weakened assumption, they scale the eigenvalues 
by the integrated density of states (IDS), cf.\ Eq.~\eqref{eq:scaleev}.
We will apply \cite{GerminetK-11b}
to conclude Poisson statistics from our version of Minami's estimate. 
\par
In \cite{GerminetK-11} an enhanced version 
of a Wegner and Minami estimate is proven, where the interval length $\lvert I \rvert$ 
is replaced by $N(I)$. Moreover, the positivity assumption (Pos) 
is again weakened and does not depend anymore on Minami's estimate. 
However, the results of \cite{GerminetK-11} are stated for the i.i.d.\ Anderson model only, 
and hence cannot be applied to the model we consider here.
In \cite{Klopp-12} a proof of Minami's estimate for one-dimensional random Schr\"odinger operators is established. Roughly speaking, Theorem 1.5 in \cite{Klopp-12} states that a Minami-type estimate holds in any region where localization and a Wegner estimate holds. This Minami-type estimate is then applied to results on eigenvalue statistics for one-dimensional models. 
\par
In \cite{CombesGK-10} important progress was made concerning the proof of
Poisson statistics for alloy-type models on $L^2(\RR^d)$. 
However, according to the information provided by F. Germinet, 
not all steps of the proof are completely correct. 
\par
Our main result is a generalization of Minami's estimate for discrete alloy-type models
with a single-site potential whose support has more than one element. 
Our single-site potential may even change its sign. 
Assumption \ref{ass:circulant} in Section~\ref{sec:model}
specifies the setting in which our result holds.
\par
The main challenge of models of the type \eqref{eq:hamiltonian}
is twofold. First, previous results on Minami's estimate are based on the fact that $u = \delta_0$, 
i.e.\ the perturbation of the discrete Laplacian is of rank one with respect to the random variables. 
Second, our single-site potential $u$ may change its sign which leads 
to negative correlations of the potential. 
This results in a non-monotone dependence on the random variables of certain spectral quantities. 
This lack of monotonicity slowed down progress in the study
on localization  for such families of operators, which is a prerequisite for Poisson statistic. 
Results on localization in the non-monotone case are far more scarce
and were achieved more recently than for the standard i.i.d.\ Anderson model, see e.g.,
\cite{Klopp-95a,Veselic-01,Veselic-02a,HislopK-02,Veselic-10a,Veselic-10b,ElgartTV-10,ElgartTV-11,Krueger-12,CaoE-12,ElgartSS-12}. 
\par
To our knowledge, our result is the first one on  Minami's estimate 
for multi-dimensional random models 
with a single-site potential which is not of rank one. 
In the proof we rely on a certain transformation of the probability space, 
which was used earlier for non-monotone models to obtain 
Wegner estimates and finiteness of Green function fractional moments \cite{Veselic-02a,Veselic-10a,TautenhahnV-10,ElgartTV-11}. 
In retrospect it is not surprising that tools developed for non-monotone models are useful 
in the context of Minami estimates: 
Namely, averaging over local environments turned out 
to be an efficient way to obtain Wegner and Green function fractional moments bounds, 
cf.\ e.g.\ \cite{Veselic-11,ElgartTV-11}. 
On the other hand it is obvious that for Minami's estimate multiple averages have to be performed.
\section{Precise assumptions on the model} 
\label{sec:model}
On the Hilbert space $\ell^2 (\mathbb{Z}^d)$ we consider the discrete alloy-type model, given by the family of discrete Schr\"odinger operators as defined in Eq.~\eqref{eq:hamiltonian}.
Recall that $\omega$ is an element of the probability space $(\Omega , \mathcal{A} , \mathbb{P})$, where $\Omega = \times_{k \in \mathbb{Z}^d} \mathbb{R}$, $\mathcal{A}$ is the $\sigma$-Algebra generated by the cylinder sets and $\mathbb{P}$ is the product measure $\mathbb{P} = \prod_{k \in \mathbb{Z}^d} \mu$. Here $\mu$ denotes an arbitrary probability measure on $\mathbb{R}$ with compact support. 
\par
Next we provide an additional assumption on the type of disorder, i.e., on $u$ and $\mu$, which is required for the main result on Minami's estimate.
\begin{assumptionDis}
\label{ass:circulant}
We assume that $\operatorname{supp}  u$ is compact, its Fourier transform $\hat u \colon [0,2\pi)^d \to
\mathbb{C}$, i.e.,
\[
 \hat u (\theta) = \sum_{k \in \mathbb{Z}^d} u(k) {\rm e}^{\ensuremath{{\mathrm{i}}}k \cdot \theta} ,
\]
does not vanish, and that the measure $\mu$ has a density $\rho \in W^{2,1} (\mathbb{R})$.
\end{assumptionDis}
\begin{remark}
Let us discuss Assumption \ref{ass:circulant} on the single-site potential $u$. 
It is  satisfied if $u$ obeys for some $k\in\mathbb{Z}^d$ the condition
\begin{equation} \label{eq:small}
 \lvert u(k) \rvert > \sum_{j \not = k} \lvert u(j) \rvert ,
\end{equation}
which may be interpreted as a decay condition on the single-site potential. 
Hence our result applies to single-site potentials 
of rank one corresponding to $u(k)$ plus small 
perturbations corresponding to the other values of the single-site potential, see Fig.~\ref{fig:potential}.%
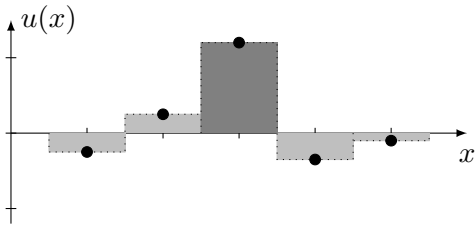
\begin{figure}[th]\centering 
\begin{tikzpicture}
\draw[-latex] (0,0)--(6,0);
\draw[-latex] (0,-1.2)--(0,1.5);
\draw (6,-0.3) node {$x$};
\draw (0.5,1.5) node {$u(x)$};
\foreach \x in {1,2,3,4,5}
 \draw (\x,-2pt)--(\x,2pt);
\foreach \y in {-1,0,1}
 \draw (-2pt,\y)--(2pt,\y);
\usetikzlibrary{patterns}
 \filldraw[black!25] (0.5,0)--(0.5,-0.25)--(1.5,-0.25)--(1.5,0);
 \filldraw[black!25] (1.5,0)--(1.5,0.25)--(2.5,0.25)--(2.5,0);
 \filldraw[black!50] (2.5,0)--(2.5,1.2)--(3.5,1.2)--(3.5,0);
 \filldraw[black!25] (3.5,0)--(3.5,-0.35)--(4.5,-0.35)--(4.5,0);
 \filldraw[black!25] (4.5,0)--(4.5,-0.1)--(5.5,-0.1)--(5.5,0);
\filldraw (1,-0.25) circle(2pt);
\filldraw (2,0.25) circle(2pt);
\filldraw (3,1.2) circle(2pt);
\filldraw (4,-0.35) circle(2pt);
\filldraw (5,-0.1) circle(2pt);
\draw[dotted] (0.5,0)--(0.5,-0.25)--(1.5,-0.25)--(1.5,0.25)--(2.5,0.25)--(2.5,1.2)--(3.5,1.2)--(3.5,-0.35)--(4.5,-0.35)--(4.5,-0.1)--(5.5,-0.1)--(5.5,0);
\draw[dotted] (2.5,0)--(2.5,0.25);
\draw[dotted] (4.5,0)--(4.5,-0.1);
\end{tikzpicture}
\caption{Illustration of condition \eqref{eq:small} on the single-site potential\label{fig:potential}}
\end{figure}
\par
By evaluating $\hat u (0)$ we see that Assumption \ref{ass:circulant} is not satisfied 
if the mean value $\sum_k u(k)$ vanishes. It is known that the analysis of 
discrete and continuum alloy-type models becomes more intricate 
when the mean value of $u$ is zero, cf.\ \cite{Klopp-02c,Veselic-10a}.
\end{remark}
The previous remark can be specified as follows: our results apply to the i.i.d.\ Anderson potential $V_\omega^{\rm A} (x) = \sum_{k \in \ZZ^d} \omega_k \delta_{x-k}$ plus a small perturbation. To be more precise, for any  $v: \ZZ^d \to \RR$ with compact support 
our assumptions are satisfied for the random potential 
\begin{align*}
 V_\omega (x) = V_\omega^{\rm A} (x) + \kappa \sum_{k \in \ZZ^d} \omega_k v(x-k) , 
\end{align*}
if $\kappa > 0$ is sufficiently small (depending on $v$) 
and $\rho \in W^{2,1}$.
In this case one would set $u : \ZZ^d \to \RR, \  u(x) = \delta_x + \kappa v(x) $ and infer that Ineq.~\eqref{eq:small} is satisfied.
\section{Minami's estimate using circulant-matrices} \label{sec:circulant}
Let us recall Minami's estimate for the Anderson model
which is the special case of the discrete alloy-type model where $u = \delta_0$. For this purpose we introduce some notation.
\par
We use the symbol $\mathbb{E}$ to denote the average over the collection of random variables $\omega_k$, $k \in \mathbb{Z}^d$. 
For $\Lambda \subset \mathbb{Z}^d$ we denote by $H_{\omega , \Lambda} : \ell^2 (\Lambda) \to \ell^2 (\Lambda)$ the natural restriction of $H_\omega$ to the set $\Lambda$. For $x,y \in \Lambda$ and $z \in \mathbb{C} \setminus \sigma (H_{\omega , \Lambda})$ we set $G_{\omega , \Lambda} (z;x,y) = \langle \delta_x , (H_{\omega, \Lambda} - z)^{-1} \delta_y \rangle$. Here $\delta_k \in \ell^2 (\Lambda)$ denotes the Dirac delta function. For $x \in \mathbb{Z}^d$ and $L>0$ we use the notation $\Lambda_{L,x} = \{y \in \mathbb{Z}^d \colon \lvert y-x \rvert_{\infty} \leq L\}$ and $\Lambda_L = \Lambda_{L,0}$. 
\par
Minami showed in \cite{Minami-96} for $u = \delta_0$ and in the case where $\mu$ has a density $\rho \in L^{\infty} (\mathbb{R})$ that for all $\Lambda \subset \mathbb{Z}^d$ finite and all $x,y \in \Lambda$ one has
\begin{equation} \label{eq:minamiiid}
 \mathbb{E} \left( \det \left\{ \Im \begin{pmatrix}
  G_{\omega , \Lambda} (z;x,x) & G_{\omega , \Lambda} (z;x,y) \\
  G_{\omega , \Lambda} (z;y,x) & G_{\omega , \Lambda} (z;y,y)
 \end{pmatrix}  \right\} \right) \leq \pi^2 \lVert \rho \rVert_\infty^2 .
\end{equation}
The proof essentially makes use of two ingredients. The first one is the so-called Krein formula, a representation of the Green function. The second one is the following lemma, which was proven in \cite{Minami-96} for symmetric matrices $M$ with $\Im M > 0$ and generalized in \cite{GrafV-07,BellissardHS-07} to arbitrary matrices $M$ with positive imaginary part in order to consider more general background operators. 
These tools will be relevant also in our proof.
\begin{lemma}\label{lemma:graf}
 Let $M = (m_{ij})_{i,j=1}^2$ with $\Im M > 0$. Then
\[
 \int_\mathbb{R} \int_\mathbb{R} \det \left( \Im \left[\begin{pmatrix}
                                 v_1 & 0 \\ 0 & v_2
                                \end{pmatrix} - M
 \right]^{-1} \right) \mathrm{d}v_1 \mathrm{d}v_2 \leq \pi^2 .
\]
\end{lemma}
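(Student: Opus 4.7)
The plan is to carry out the two integrations explicitly and reduce the lemma to an algebraic inequality equivalent to $\det(\Im M)>0$. Writing $G=G(v_1,v_2):=(\mathrm{diag}(v_1,v_2)-M)^{-1}$, the identity $G^{-1}-(G^*)^{-1}=M^*-M=-2i\,\Im M$, multiplied by $G$ on the left and $G^*$ on the right, yields the key algebraic relation
\[
\Im G \;=\; G\,(\Im M)\,G^*,
\]
so that
\[
\det(\Im G)\;=\;|\det G|^2\,\det(\Im M)\;=\;\frac{\det(\Im M)}{|D(v_1,v_2)|^2},
\]
where $D(v_1,v_2)=(v_1-m_{11})(v_2-m_{22})-m_{12}m_{21}$. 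The lemma is therefore equivalent to the scalar bound $\iint_{\RR^2}|D|^{-2}\,dv_1\,dv_2\le\pi^2/\det(\Im M)$.

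For fixed $v_2$, $D$ is affine in $v_1$ with complex root $z_1(v_2)=m_{11}+m_{12}m_{21}/(v_2-m_{22})$, so the residue identity $\int_\RR dx/|x-z|^2=\pi/|\Im z|$ gives
\[
\int_\RR \frac{dv_1}{|D|^2} \;=\; \frac{\pi}{|v_2-m_{22}|^2\,|\Im z_1(v_2)|} \;=\; \frac{\pi}{f(v_2)},
\]
where $f(v_2)=|v_2-m_{22}|^2\,\Im m_{11}+\Im\bigl(m_{12}m_{21}\,\overline{v_2-m_{22}}\bigr)$ is a quadratic in $v_2$ with positive leading coefficient $\Im m_{11}$. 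A second application of the standard formula $\int dt/(at^2+bt+c)=\pi/\sqrt{ac-b^2/4}$ then reduces the target bound to the purely algebraic inequality
\[
4\det(\Im M)^2 \;\le\; 4\,\Im m_{11}\,\Im m_{22}\bigl[\Im m_{11}\Im m_{22}+\Re(m_{12}m_{21})\bigr]-\Im(m_{12}m_{21})^2.
\]

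With the abbreviations $u=|m_{12}|^2$, $v=|m_{21}|^2$, the identity $|m_{12}m_{21}|^2=\Re(m_{12}m_{21})^2+\Im(m_{12}m_{21})^2$ and the Hermitian formula $\det(\Im M)=\Im m_{11}\Im m_{22}-|m_{12}-\overline{m_{21}}|^2/4$ together reduce the above inequality, after dividing by $u+v$, to
\[
\tfrac{u+v}{4}+\tfrac{uv}{u+v} \;\le\; 2\,\Im m_{11}\,\Im m_{22}+\Re(m_{12}m_{21}).
\]
The AM-GM bound $uv\le(u+v)^2/4$ estimates the left side by $(u+v)/2$, and what remains is exactly $|m_{12}-\overline{m_{21}}|^2\le 4\,\Im m_{11}\Im m_{22}$, i.e.\ $\det(\Im M)\ge 0$, which holds strictly by hypothesis. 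The same chain of estimates furthermore forces the discriminant of $f(v_2)$ to be at most $-4\det(\Im M)^2<0$, so $f(v_2)>0$ on all of $\RR$ and the second residue integration is legitimate.

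The main obstacle is bookkeeping: verifying that $\Im z_1(v_2)$ stays strictly positive as $v_2$ varies over $\RR$ (so both one-dimensional integrals fall under $\int dx/|x-z|^2=\pi/|\Im z|$), and keeping careful track of complex-conjugate versus Hermitian conventions, since $M$ is not assumed symmetric and $m_{12}\ne\overline{m_{21}}$ in general. Once these positivity facts are in place, the chain of equalities above closes and the constant $\pi^2$ on the right-hand side of the lemma appears directly from the two factors of $\pi$ produced by the successive integrations.
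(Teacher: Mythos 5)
Your argument is correct. The paper does not give its own proof of this lemma, citing \cite{Minami-96,GrafV-07,BellissardHS-07} instead; your direct route---the identity $\Im G = G(\Im M)G^*$, hence $\det(\Im G)=|\det G|^2\det(\Im M)$, followed by two explicit Poisson-kernel integrations and a reduction to AM--GM together with $\det(\Im M)>0$---is exactly the style of computation in Graf--Vaghi for a non-symmetric $M$, and I have checked that the algebraic chain closes (with $c=\Im m_{11}$, $b=\Im m_{22}$, $p=\Re(m_{12}m_{21})$, $q=\Im(m_{12}m_{21})$, $u=|m_{12}|^2$, $v=|m_{21}|^2$, the needed inequality $4\det(\Im M)^2\le 4c^2b^2+4cbp-q^2$ does reduce to $uv\le(u+v)^2/4$ plus $|m_{12}-\overline{m_{21}}|^2\le 4cb$, and the discriminant of $f$ is then $\le -4\det(\Im M)^2<0$, so both integrations are legitimate). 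One tiny omission: dividing by $u+v$ requires $u+v>0$; the degenerate case $m_{12}=m_{21}=0$ should be noted separately, though there the inequality is immediate since $\det(\Im M)=cb$.
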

In the remainder of this section we prove Minami's estimate for more general single-site potentials, i.e.\ under Assumption \ref{ass:circulant}. 
The idea is to use a special transformation of the probability space. This approach has been used earlier, 
e.g., to prove Wegner estimates for Anderson models on the lattice and in the continuum with a sign-changing single-site potential, see e.g.\ \cite{Veselic-01,Veselic-02a,TautenhahnV-10,Veselic-10b}.
\par
First we introduce the mentioned linear transformation. 
For $\Lambda \subset \mathbb{Z}^d$ finite and $\operatorname{supp} u$ compact we define $l = \{\max \{ \lVert x \rVert_\infty \colon x \in \Lambda \}\}$, choose $R \in \NN$ such that $\operatorname{supp} u \subset \Lambda_R$ and set $L = l+R$. Then the cube $\Lambda^+ := \Lambda_{l+R} = \Lambda_L$ contains the set
$\bigcup_{x \in \Lambda} \{k \in \mathbb{Z}^d \colon
u(x-k) \not = 0 \}$, which is the set of all lattice sites $j$ whose coupling constant influences the potential values in $\Lambda$.
Therefore, the restricted operator $H_{\omega ,\Lambda}$ depends only on the random variables $\omega_k$, $k \in \Lambda^+$. This means, if we consider the expectation of some measurable function of $H_{\omega , \Lambda}$ it suffices to average merely with respect to the finite collection of random variables $\omega_k$, $k \in \Lambda^+$.
\par
Let now $A:\ell^1 (\mathbb{Z}^d) \to \ell^1 (\mathbb{Z}^d)$ be the linear operator whose coefficients in the canonical orthonormal basis are given by $A(j,k) = u(j-k)$ for $j,k \in \mathbb{Z}^d$. Since $u$ has compact support, the operator $A$ is bounded. 
If $\hat u$ does not vanish (as required by Assumption \ref{ass:circulant}), the operator $A$ has a bounded inverse by the so-called $1/f$-Theorem of Wiener and we have
\begin{equation} \label{eq:Cu}
C_u := \lVert A^{-1} \rVert_1 < \infty ,
\end{equation}
see \cite{Veselic-10b} for details. Moreover, there exists an invertible matrix $A_{\Lambda^+} : \ell^1 (\Lambda^+) \to \ell^1 (\Lambda^+)$ satisfying 
\begin{equation} \label{eq:circ}
 A_{\Lambda^+} (i,j) = u (i-j) \quad \text{for all $i\in \Lambda$ and $j \in
\Lambda^+$}, \quad \text{and} \quad \lVert A_{\Lambda^+}^{-1} \rVert_1 \leq C_u ,
\end{equation}
One possible choice of $A_{\Lambda^+} : \ell^1 (\Lambda^+) \to \ell^1 (\Lambda^+)$ is 
\begin{equation*} \label{eq:matrix_circ}
A_{\Lambda^+} (i,j) = u\bigl( \pi_{L}(i-j) \bigr) , \quad i,j \in \Lambda^+ .
\end{equation*}
Here the map $\pi_{L} : \ZZ^d \to \Lambda^+$ is defined by $\pi_{L} (x) = \Lambda^+ \cap ((2L+1)\ZZ^d + x)$. Note that (due to the definition of the projection map $\pi_L$) this choice of $A_{\Lambda^+}$ is a multi-dimensional circulant matrix. That the first condition in \eqref{eq:circ} is satisfied for this circulant matrix follows immediately by the choice of $R$ and the definition of $\pi_L$. The invertibility of $A_{\Lambda^+}$ and the second property of \eqref{eq:circ} follows by a calculation similar to the proof of Proposition 9 in \cite{Veselic-10b}.
\par
In the special case $d = 1$, $\Lambda = \Lambda_1$ and $\operatorname{supp} u = \{ -1,0,1 \}$, we have $l = 1$, can choose $R = 1$, and the matrix from Eq.~\eqref{eq:matrix_circ} reads in the canonical basis
\[
A_{\Lambda^+} = \begin{pmatrix}
u(0) & u(-1) &0 &0 & u(1)  \\
u(1) & u(0) & u(-1) &0 &0  \\
0    & u(1) & u(0) & u(-1) &0  \\
0    &0 & u(1) & u(0) & u(-1) \\
u(-1)    &0 &0 & u(1) & u(0)  \\
                \end{pmatrix} .
\]
\par
Set $\omega_{\Lambda^+} = (\omega_k)_{k \in \Lambda^+}$ and define the new (random) vector $\zeta = (\zeta_k)_{k \in \Lambda^+} = A_{\Lambda^+} \omega_{\Lambda^+}$. Then, by Eq.~\eqref{eq:circ}, the vector $\zeta$ satisfies $\zeta_k = V_\omega (k)$ for all $k \in \Lambda$ (but not necessarily for $k \in \Lambda^+ \setminus \Lambda$). 
This property is important for the proof of our main technical result:
\begin{theorem}\label{thm:result1}
Let $\Lambda \subset \mathbb{Z}^d$ be finite and Assumption \ref{ass:circulant} satisfied. 
Then we have for all $x,y \in \Lambda$ with $x \not = y$, all $z \in \mathbb{C}$ with $\Im z > 0$ and all $\lambda > 0$
\[
 \mathbb{E} \left( \det \left\{ \Im \begin{pmatrix}
  G_{\omega , \Lambda} (z;x,x) & G_{\omega , \Lambda} (z;x,y) \\
  G_{\omega , \Lambda} (z;y,x) & G_{\omega , \Lambda} (z;y,y)
 \end{pmatrix}  \right\} \right) \leq \left(\frac{\pi}{\lambda}\right)^2 C_{\rm Min} ,
\]
where 
\[
 C_{\rm Min} = \frac{C_u^2}{4} \max\{\lVert \rho' \rVert_1^2 , \lVert \rho'' \rVert_1\} 
\]
and $C_u$ is the constant from Eq.~\eqref{eq:Cu}.
\end{theorem}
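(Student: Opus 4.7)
The plan is to perform the linear change of variables $\zeta = A_{\Lambda^+}\omega_{\Lambda^+}$ so that the potential values at points of $\Lambda$ become new independent coordinates (since $\zeta_k = V_\omega(k)$ for $k \in \Lambda$), to represent the $2 \times 2$ Green function block in a form accessible to Lemma~\ref{lemma:graf}, and to convert the resulting (non-product) density bound into one involving $\lVert\rho'\rVert_1$ and $\lVert\rho''\rVert_1$ through two layers of integration by parts.

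Concretely, I would first decompose $\ell^2(\Lambda) = \mathrm{span}\{\delta_x,\delta_y\}\oplus\ell^2(\Lambda\setminus\{x,y\})$ and apply Krein's formula (Schur's complement). The only entries of $H_{\omega,\Lambda}-z$ depending on $\zeta_x,\zeta_y$ are the two diagonal entries of the upper-left block, so the corresponding block of the resolvent has the form $\bigl[\lambda\,\mathrm{diag}(\zeta_x,\zeta_y) - M(\zeta_{\Lambda\setminus\{x,y\}},z)\bigr]^{-1}$, and a short computation shows $\Im M > 0$ whenever $\Im z > 0$ (positivity comes from $-\Im z$ on the diagonal plus a nonnegative Schur contribution from the lower-right block). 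Pushing $\mathbb{P}$ forward under $A_{\Lambda^+}$ gives the joint density $p(\zeta) = \lvert\det A_{\Lambda^+}^{-1}\rvert\prod_{k\in\Lambda^+}\rho((A_{\Lambda^+}^{-1}\zeta)_k)$, while $H_{\omega,\Lambda}$ depends on $\zeta$ only through $\zeta_\Lambda$. Rescaling $v_i = \lambda\zeta_\bullet$ and applying Lemma~\ref{lemma:graf} slicewise in $(\zeta_x,\zeta_y)$ leads to
\[
 \mathbb{E}\bigl[\det\Im G^{2\times 2}\bigr] \le \Bigl(\frac{\pi}{\lambda}\Bigr)^{\!2} \int \sup_{\zeta_x,\zeta_y} q(\zeta_\Lambda)\, d\zeta_{\Lambda\setminus\{x,y\}},
\]
where $q$ denotes the marginal density of $\zeta_\Lambda$.

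To estimate the right-hand side I would use the elementary identity
\[
 g(a,b) = \frac{1}{4}\!\int\!\!\int \mathrm{sgn}(u-a)\,\mathrm{sgn}(v-b)\, \partial_u\partial_v g(u,v)\, du\, dv,
\]
valid for $g\in W^{2,1}(\mathbb{R}^2)$ vanishing at infinity, which yields $\sup_{a,b}\lvert g\rvert \le \frac{1}{4}\lVert \partial_u\partial_v g\rVert_{L^1(\mathbb{R}^2)}$. Applied slicewise in $(\zeta_x,\zeta_y)$ to $q$ and combined with Fubini (using that $q$ is the $\Lambda$-marginal of $p$), this gives $\int \sup_{\zeta_x,\zeta_y} q\, d\zeta_{\Lambda\setminus\{x,y\}} \le \frac{1}{4}\lVert\partial_{\zeta_x}\partial_{\zeta_y} p\rVert_{L^1(\mathbb{R}^{\lvert\Lambda^+\rvert})}$. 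The product-rule expansion of this mixed derivative splits into a diagonal part (one factor of $\rho''$) and an off-diagonal part (two factors of $\rho'$), each weighted by products of entries of $A_{\Lambda^+}^{-1}$. Changing variables back to $\omega$ cancels the Jacobian and yields
\[
 \bigl\lVert\partial_{\zeta_x}\partial_{\zeta_y} p\bigr\rVert_1 \le \max\bigl\{\lVert\rho'\rVert_1^2,\lVert\rho''\rVert_1\bigr\}\!\!\sum_{k,l\in\Lambda^+}\!\bigl\lvert(A_{\Lambda^+}^{-1})_{kx}\bigr\rvert\bigl\lvert(A_{\Lambda^+}^{-1})_{ly}\bigr\rvert \le C_u^2 \max\bigl\{\lVert\rho'\rVert_1^2,\lVert\rho''\rVert_1\bigr\},
\]
where the final step uses $\sum_k \lvert(A_{\Lambda^+}^{-1})_{k,i}\rvert \le \lVert A_{\Lambda^+}^{-1}\rVert_1 \le C_u$ from \eqref{eq:circ}. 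Combining the three bounds gives the claimed constant $C_{\mathrm{Min}} = (C_u^2/4)\max\{\lVert\rho'\rVert_1^2,\lVert\rho''\rVert_1\}$.

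The main obstacle is that the transformation $\omega\mapsto\zeta$ couples all coordinates, so $p(\zeta)$ has no product structure and Lemma~\ref{lemma:graf} cannot be paired with an $\lVert\rho\rVert_\infty^2$-type bound. The remedy is to trade this coupling for two successive integrations by parts: the $1/4$-identity converts $\sup q$ into an $L^1$ norm of $\partial_{\zeta_x}\partial_{\zeta_y} p$, and the product rule against $p$ produces at most two derivatives on $\rho$, each weighted by a column of $A_{\Lambda^+}^{-1}$ whose $\ell^1$ norm is uniformly controlled by $C_u$ thanks to the non-vanishing of $\hat u$ guaranteed by Assumption~\ref{ass:circulant}.
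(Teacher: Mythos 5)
Your proposal is correct and follows essentially the same route as the paper: the linear change of variables $\zeta = A_{\Lambda^+}\omega_{\Lambda^+}$, Krein's formula to isolate the $(\zeta_x,\zeta_y)$-dependence, Lemma~\ref{lemma:graf} slicewise, the Sobolev-type bound $\lVert f\rVert_\infty \le \tfrac{1}{4}\lVert D^{(1,1)}f\rVert_1$, a product-rule expansion of $\partial_{\zeta_x}\partial_{\zeta_y}$ of the pushed-forward density, and the change of variables back to $\omega$ to cancel the Jacobian and produce $\lVert B\rVert_1^2 \le C_u^2$. The two cosmetic differences — passing through the marginal $q(\zeta_\Lambda)$ rather than integrating the full density $\tilde k$ directly, and deriving the $\tfrac14$-constant via the $\operatorname{sgn}\otimes\operatorname{sgn}$ kernel rather than averaging the four quadrant integrals — are equivalent to the paper's steps and do not change the argument.
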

As in \cite{GrafV-07,BellissardHS-07,CombesGK-09} the theorem still holds if 
the non-random part of $H_\omega$ is not the negative Laplacian $-\Delta$, 
but rather an arbitrary self-adjoint operator $H_0$.
\begin{proof}
 Let $\Lambda^+$, $A_{\Lambda^+}$ be as above and set $B :=  A_{\Lambda^+}^{-1}$. For $x,y \in \Lambda$ with $x \not = y$ the second resolvent identity implies the so-called Krein formula, see e.g.\ \cite{AizenmanM-93}, i.e.
\[
g(z) = \begin{pmatrix}
  G_{\omega , \Lambda} (z;x,x) & G_{\omega , \Lambda} (z;x,y) \\
  G_{\omega , \Lambda} (z;y,x) & G_{\omega , \Lambda} (z;y,y)
 \end{pmatrix} 
= \lambda^{-1} \left( \begin{pmatrix}
              V_\omega (x) & 0 \\ 0 & V_\omega (y) 
             \end{pmatrix} - M
 \right)^{-1} , 
\]
where $M$ is the $2\times 2$ matrix
\[
M= - \lambda^{-1} \begin{pmatrix}
    \hat G_{\omega , \Lambda} (z;x,x) & \hat G_{\omega , \Lambda} (z;x,y) \\
    \hat G_{\omega , \Lambda} (z;y,x) & \hat G_{\omega , \Lambda} (z;y,y)
   \end{pmatrix}^{-1} .
\]
Here $\hat G_{\omega , \Lambda} (z;u,v)$ is the Green function of the operator $\hat H_{\omega , \Lambda} = H_{\omega , \Lambda} - V_\omega (x) P_x - V_{\omega} (y) P_y$, where, for $k \in \Lambda$, $P_k : \ell^2 (\Lambda) \to \ell^2 (\Lambda)$ is the orthogonal projection on the state $\delta_k$, i.e.\ $P_k \psi = \psi (k) \delta_k$. Let us also note that $\Im M$ is positive definite if $\Im z > 0$, see e.g.\ \cite{Graf-94,GrafV-07}, and therefore $\operatorname{diag} ( V_\omega(x) , V_\omega(y)) - M$ is invertible.
\par
Set $\omega_{\Lambda^+} = (\omega_k)_{k \in
\Lambda^+}$, $k(\omega_{\Lambda^+}) = \prod_{k \in \Lambda^+} \rho
(\omega_k)$, $\ensuremath{\mathrm{d}} \omega_{\Lambda^+} = \prod_{k \in \Lambda^+} \ensuremath{\mathrm{d}} \omega_k$
and $n = \lvert \Lambda^+ \rvert$. Using the substitution
$\zeta = (\zeta_k)_{k \in \Lambda^+} = A_{\Lambda^+} \omega_{\Lambda^+}$ 
we obtain from Eq.~\eqref{eq:circ}
\begin{align}
 \mathbb{E} \left( \det [\Im g(z)] \right) & = \int_{\mathbb{R}^n} \!\!\!\!\! \det [\Im g(z)] k(\omega_{\Lambda^+}) \mathrm{d} \omega_{\Lambda^+} 
\nonumber \\[1ex] 
&=\lambda^{-2}\int_{\mathbb{R}^n} \!\!\!\!\! \det \left( \Im \left(\begin{pmatrix}
              \zeta_x & 0 \\ 0 & \zeta_y 
             \end{pmatrix} -M
 \right)^{-1} \right) \tilde k(\zeta)  \mathrm{d} \zeta \label{eq:desintegration}
\end{align}
where $\tilde k (\zeta) = k(B \zeta) \lvert \det B \rvert$ and $\mathrm{d} \zeta = \prod_{k \in \Lambda^+} \mathrm{d} \zeta_k$.
Since $\zeta_k = V_\omega (k)$ for all $k \in \Lambda$ by construction, the matrix $M$ does not depend on the parameters $\zeta_x$ and $\zeta_y$, though it may be correlated with the random variables $\zeta_x$ or $\zeta_y$. This dependence is encoded in the joint density $\tilde k(\zeta)$ of the random variables $\zeta_k$, $k \in \Lambda^+$. By Fubini's theorem and Lemma~\ref{lemma:graf} we obtain
\begin{equation} \label{eq:supsup}
 \mathbb{E} \left( \det [\Im g(z)] \right) \leq \left(\frac{\pi}{\lambda}\right)^2 \int_{\mathbb{R}^{n - 2}} \left(\sup_{\zeta_y} \sup_{\zeta_x} k(B \zeta)\right)  \lvert \det B \rvert \!\!\!\! \prod_{k \in \Lambda^+ \setminus \{x,y\}} \!\!\!\! \mathrm{d} \zeta_{k} .
\end{equation}
Now we use a special case of the Sobolev imbedding theorem \cite[Theorem 4.12]{Adams-75}, namely that $W^{2,1} (\mathbb{R}^2) \hookrightarrow L^{\infty} (\mathbb{R}^2)$. In particular, by Lemma~\ref{lemma:sobolev} we have
\begin{equation} \label{eq:viertel}
\lVert  f \rVert_\infty \leq \frac{1}{4}  \lVert D^{(1,1)} f \rVert_{1} \quad \text{for $f \in W^{2,1} (\mathbb{R}^2)$} .
\end{equation}
Note that for fixed $\zeta_k$, $k \in \Lambda^+ \setminus \{x,y\}$ the mapping 
\[
 (\zeta_x , \zeta_y) \mapsto \tilde k (\zeta)
\]
is an element of $W^{2,1} (\mathbb{R}^2)$ since $\rho \in W^{2,1} (\mathbb{R})$ by assumption. For the weak derivative $\partial_{\zeta_x} \partial_{\zeta_y} \tilde k$ we calculate 
\begin{align}
 \partial_{\zeta_x} \partial_{\zeta_y} \tilde k (\zeta) &= \sum_{j \in \Lambda^+} B_{jy} \Biggl( \rho'' \left( (B \zeta)_j \right) B_{jx} \prod_{\genfrac{}{}{0pt}{2}{k \in \Lambda^+}{k \not = j} } \rho \left( (B \zeta)_k \right)   \Biggr. \nonumber \\
&\quad + \Biggl.\sum_{\genfrac{}{}{0pt}{2}{l \in \Lambda^+}{l \not = j} } B_{lx} \rho' \left((B\zeta)_j \right) \rho' \left( (B\zeta)_l \right)  \prod_{\genfrac{}{}{0pt}{2}{k \in \Lambda^+}{k \not= j,l} } \rho \left( (B\zeta)_k \right) \Biggr)  . \label{eq:derivative}
\end{align}
Hence we obtain from Ineq.~\eqref{eq:supsup} and Ineq.~\eqref{eq:viertel}
\begin{align} \label{eq:sob2}
 \mathbb{E} \left( \det [\Im g(z)] \right) &\leq \frac{\pi^2}{4\lambda^2}  \int_{\mathbb{R}^{n}} \lvert \partial_{\zeta_x} \partial_{\zeta_y} \tilde k (\zeta) \rvert \lvert \det B \rvert \!\!\!\! \prod_{k \in \Lambda^+ \setminus \{x,y\}} \!\!\!\! \mathrm{d} \zeta_{k} .
\end{align}
When substituting back into original coordinates, $\omega_{\Lambda^+} = B \zeta$, we obtain from Eq.~\eqref{eq:derivative}, Ineq.~\eqref{eq:sob2} and the triangle inequality
\begin{align}
\label{eq:final}
\mathbb{E} \left( \det [\Im g(z)] \right) &\leq \frac{\pi^2}{4\lambda^2} \sum_{j \in \Lambda^+} \lvert B_{jy} \rvert \Bigl[ \lVert \rho'' \rVert_1 \lvert B_{jx} \rvert + \sum_{\genfrac{}{}{0pt}{2}{l \in \Lambda^+}{l \not = j}} \lVert \rho' \rVert_1^2 \lvert B_{lx} \rvert \Bigr] \\
& \leq \frac{\pi^2}{4 \lambda^2} \max \{\lVert \rho' \rVert_1^2 , \lVert \rho'' \rVert_1 \} \lVert B \rVert_1^2  . \nonumber
%
\end{align}
We use $\lVert B \rVert_1^2 \leq C_u^2$ as discussed in Ineq.~\eqref{eq:circ} and obtain the statement of the theorem.
\end{proof}
\begin{remark}
Another way to apply Lemma~\ref{lemma:graf} to Eq.~\eqref{eq:desintegration} is to use conditional densities:
\begin{multline*}
 \mathbb{E} \left( \det [\Im g(z)] \right) =
\\ \lambda^{-2}\int_{\mathbb{R}^{n-2}} \!\!\!\!\!\! F(\zeta^\perp)
\int_{\mathbb{R}^{2}} \!\!\!  \det \left( \Im \left(\begin{pmatrix}
              \zeta_x & 0 \\ 0 & \zeta_y 
             \end{pmatrix} -M
 \right)^{-1} \right) f(\zeta_x,\zeta_y;\zeta^\perp) \mathrm{d} \zeta_x \mathrm{d} \zeta_y \,  \mathrm{d} \zeta^\perp,
\end{multline*}
where $\zeta^\perp:= (\zeta_k)_{k \in \Lambda^+\setminus\{x,y\}} $, denotes the parameters except those associated with $x$ and $y$,
$F(\zeta^\perp):= \int \mathrm{d} \zeta_x \int \mathrm{d} \zeta_y  \tilde k(\zeta) $ is the marginal and 
$f(\zeta_x,\zeta_y;\zeta^\perp):= \tilde k(\zeta)/ F(\zeta^\perp) $ the conditional density.
The mentioned lemma gives
\begin{equation*}
 \mathbb{E} \left( \det [\Im g(z)] \right) 
\leq   \frac{\pi^2}{\lambda^2}  C_{\rm Min} \int_{\mathbb{R}^{n-2}}  F(\zeta^\perp) \sup_{\zeta_x}\sup_{\zeta_y} f(\zeta_x,\zeta_y;\zeta^\perp) \mathrm{d} \zeta^\perp .
\end{equation*}
Now it's tempting to estimate and pull the conditional density  out as $\sup_{\zeta_x} \sup_{\zeta_y} \sup_{\zeta^\perp} f(\zeta_x,\zeta_y;\zeta^\perp) $.
However such suprema are typically infinite, see e.g. the discussion in \cite{TautenhahnV-10a}. 
One can try to estimate the averaged quantity $\int_{\mathbb{R}^{n-2}}  F(\zeta^\perp) \sup_{\zeta_x}\sup_{\zeta_y} f(\zeta_x,\zeta_y;\zeta^\perp) \mathrm{d} \zeta^\perp $.
This actually leads to estimates similar in nature to the one performed in Ineq.~\eqref{eq:supsup} to \eqref{eq:final}.
\end{remark}
Theorem~\ref{thm:result1} has an important corollary, a bound on the probability of finding at least two eigenvalues of $H_{\omega , \Lambda}$ in a certain interval. 
\begin{corollary} \label{cor:minami}
 Let Assumption \ref{ass:circulant} be satisfied, $\Lambda \subset \mathbb{Z}^d$ finite and $I \subset \mathbb{R}$ be a bounded interval. Then we have for all $\lambda > 0$
\begin{align}
\label{eq:Cebysev}
 \mathbb{P} \bigl\{ \operatorname{Tr} \chi_{I} (H_{\omega , \Lambda}) \geq 2 \bigr\} 
&\leq 
\frac{1}{2}\mathbb{E} \bigl( (\operatorname{Tr} \chi_{I} (H_{\omega , \Lambda}))^2  - \operatorname{Tr} \chi_{I} (H_{\omega , \Lambda}) \bigr) 
\\ \nonumber
&\leq 
\frac{1}{2}\left(\frac{\pi}{\lambda}\right)^2 C_{\rm Min} \lvert I \rvert^2 \lvert \Lambda \rvert^2 ,
\end{align}
where $C_{\rm Min}$ is the constant from Theorem~\ref{thm:result1}.
\end{corollary}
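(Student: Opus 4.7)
I would organize the proof into three steps: a Chebyshev reduction to a second factorial moment, a combinatorial expansion into $2 \times 2$ principal minors of $\chi_I(H_{\omega,\Lambda})$, and an operator-inequality comparison that makes Theorem~\ref{thm:result1} directly applicable. Set $N_\omega := \operatorname{Tr}\chi_I(H_{\omega,\Lambda})$. Since $N_\omega$ takes values in $\mathbb{N}_0$, the event $\{N_\omega \geq 2\}$ coincides with $\{N_\omega(N_\omega - 1) \geq 2\}$, so Markov's inequality applied to the nonnegative random variable $N_\omega(N_\omega-1)$ yields the first line of \eqref{eq:Cebysev}.

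For the second step, I would exploit that $\chi_I(H_{\omega,\Lambda})$ is a self-adjoint projection: one has $N_\omega = \operatorname{Tr}\chi_I(H_{\omega,\Lambda})^2 = \sum_{x,y \in \Lambda} |\chi_I(H_{\omega,\Lambda})_{xy}|^2$, while trivially $N_\omega^2 = \sum_{x,y\in\Lambda} \chi_I(H_{\omega,\Lambda})_{xx}\chi_I(H_{\omega,\Lambda})_{yy}$. Subtracting and collecting terms,
\[
 N_\omega^2 - N_\omega = \sum_{\substack{x,y\in\Lambda\\ x\neq y}} \det M_{x,y}^{\omega}, \qquad M_{x,y}^\omega := \begin{pmatrix} \chi_I(H_{\omega,\Lambda})_{xx} & \chi_I(H_{\omega,\Lambda})_{xy} \\ \chi_I(H_{\omega,\Lambda})_{yx} & \chi_I(H_{\omega,\Lambda})_{yy} \end{pmatrix},
\]
the $x=y$ contributions vanishing because the corresponding $2\times 2$ matrix has rank one. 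Hence $\mathbb{E}(N_\omega^2 - N_\omega) \leq |\Lambda|^2 \sup_{x\neq y}\mathbb{E}\det M_{x,y}^\omega$.

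For the third and main step, let $E_0$ be the midpoint of $I$ and set $z_0 := E_0 + \mathrm{i}|I|/2$. A one-line pointwise check yields
\[
 \chi_I(\lambda) \leq |I|\cdot\Im(\lambda - z_0)^{-1} = \frac{|I|^2/2}{(\lambda - E_0)^2 + |I|^2/4}, \qquad \lambda \in \mathbb{R},
\]
since for $\lambda \in I$ the denominator is at most $|I|^2/2$, making the right side $\geq 1$, whereas outside $I$ the left side vanishes. By functional calculus this lifts to the positive operator inequality $\chi_I(H_{\omega,\Lambda}) \leq |I|\cdot\Im(H_{\omega,\Lambda} - z_0)^{-1}$. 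Compressing both positive operators to $\operatorname{span}\{\delta_x,\delta_y\}$ preserves the Loewner order, and Loewner-monotonicity of the determinant on positive Hermitian $2\times 2$ matrices then gives
\[
 \det M_{x,y}^\omega \leq |I|^2 \det\begin{pmatrix} \Im G_{\omega,\Lambda}(z_0;x,x) & \Im G_{\omega,\Lambda}(z_0;x,y) \\ \Im G_{\omega,\Lambda}(z_0;y,x) & \Im G_{\omega,\Lambda}(z_0;y,y) \end{pmatrix}.
\]
Taking expectation and invoking Theorem~\ref{thm:result1} at $z = z_0$ (a bound uniform in $z$ with $\Im z > 0$) yields $\mathbb{E}\det M_{x,y}^\omega \leq |I|^2 (\pi/\lambda)^2 C_{\mathrm{Min}}$, and summing over at most $|\Lambda|^2$ pairs completes the proof.

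The only nontrivial ingredient is the scalar-to-operator inequality of the third step; it elegantly bypasses the more technical Stone-formula $\varepsilon \downarrow 0$ limit by selecting a single well-chosen complex energy $z_0$, after which the application of Theorem~\ref{thm:result1} is immediate. Everything else is routine bookkeeping.
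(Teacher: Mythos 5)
Your proof is correct and follows essentially the same route as the references the paper points to for this corollary (\cite{Minami-96}, \cite[Appendix]{KleinM-06}, \cite{CombesGK-09}): Markov applied to $N_\omega(N_\omega-1)$, the expansion of $N_\omega^2-N_\omega$ into $2\times 2$ principal minors of the spectral projection, the scalar inequality $\chi_I(\lambda)\leq|I|\,\Im(\lambda-z_0)^{-1}$ with $z_0$ the midpoint of $I$ raised by $\mathrm{i}|I|/2$, Loewner-monotonicity of the determinant on positive $2\times2$ Hermitian matrices, and finally Theorem~\ref{thm:result1} evaluated at the single energy $z_0$. All steps are justified, including the subtle points (the matrices compared are genuinely positive semidefinite, and $\Im g(z)$ is entrywise the imaginary part since $H_{\omega,\Lambda}$ is real symmetric), so the argument is a valid and complete proof of Corollary~\ref{cor:minami}.
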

The proof of this corollary is due to \cite{Minami-96}, see also \cite[Appendix]{KleinM-06} or \cite{CombesGK-09}. 
These papers study the case $u = \delta_0$, however the proof applies 
to the discrete alloy-type model studied in this note, as well. 
\section{Application to Poisson statistics for energy level spacing} \label{sec:poisson}
In this section we prove under Assumption~\ref{ass:circulant} that near an energy where Anderson localization holds
and where the IDS increases, there is no correlation between eigenvalues of $H_{\omega , \Lambda}$ if $\lvert \Lambda \rvert$ is large. 
To characterize Anderson localization we will use here the following finite volume criterion.
\begin{assumption_FVC} \label{ass:FVC} 
Let $I \subset \mathbb{R}$. We say that Assumption (FVC) is satisfied in $I$ if for all $E \in I$ there exists $\Theta > 3d-1$ such that
\[
 \limsup_{L\to \infty} \mathbb{P} \left\{ \forall x,y \in \Lambda_{L,0} , \ \lvert x-y \rvert_\infty \geq \frac{L}{2} : \lvert  G_{\omega , \Lambda_{L,0}} (E;x,y) \rvert \leq L^{-\Theta} \right\} = 1 .
\]
\end{assumption_FVC}
As we will discuss in Section~\ref{s:Assumptions} this finite volume criterion is indeed equivalent to various notions of localization,
once an (sufficiently good) Wegner estimate is available.
\par
Let $L \in \mathbb{N}$ and and $E_1^\omega (\Lambda_L) \leq E_2^\omega (\Lambda_L) \leq \ldots \leq E_{\lvert \Lambda_L \rvert}^\omega (\Lambda_L)$ be the eigenvalues of $H_{\omega , \Lambda_L}$ repeated according to multiplicity. Since $(H_\omega)_\omega$ is an ergodic family of random operators, the IDS exists as a (non-random) distribution function $N :  \mathbb{R} \to [0,1]$, satisfying for almost all $\omega \in \Omega$
\[
 N(E) = \lim_{L \to \infty} \frac{1}{\lvert \Lambda_L \rvert} \# \{ j \in \mathbb{N} \colon E_j^\omega (\Lambda_L) \leq E \},
\]
at all continuity points of $N$. In particular, if Assumption \ref{ass:circulant} is satisfied, the IDS is known to be Lipschitz continuous \cite{Veselic-10b}. 
Let us now introduce a second hypothesis which may be interpreted as a quantitative growth condition on the IDS or a positivity assumption on the density of states measure.
\begin{assumption_pos} 
 Let $E_0 \in \mathbb R$ and $\kappa \geq 0$. We say that Assumption (Pos) is satisfied for $E_0$ and $\kappa $ if for all $a<b$ there exists $C,\epsilon_0 > 0$ such that for all $\epsilon \in (0,\epsilon_0)$ there holds
\[
 \lvert N(E_0 + a \epsilon) - N(E_0 + b \epsilon) \rvert \geq C \epsilon^{1 + \kappa } .
\]
\end{assumption_pos}
For $E_0 \in \mathbb{R}$ we consider the rescaled spectrum $\xi^\omega = (\xi_j^\omega)_{j=1}^{\lvert \Lambda_L \rvert}$, defined by
\begin{equation} \label{eq:scaleev}
 \xi_j^\omega = \xi_j^\omega (L , E_0) = \lvert \Lambda_L \rvert \left(N(E_j^\omega (\Lambda_L)) - N(E_0)\right), \quad j=1,\ldots , \lvert \Lambda_L \rvert ,
\end{equation}
and the associated point process $\Xi : \Omega \to \mathcal{M}_{\rm p}$ given by
\begin{equation} \label{eq:process}
 \Xi^\omega = \Xi^\omega_{L , E_0}  = \sum_{j=1}^{\lvert \Lambda_L \rvert} \delta_{\xi_j^\omega} ,
\end{equation}
where $\delta_x$ is the Dirac measure concentrated at $x$ and $\mathcal{M}_{\rm p}$ is the set of all integer valued Radon measures on $\mathbb{R}$. 
A point process $\Upsilon$ is called Poisson point process with intensity measure $\mu$ if 
\[
 \mathbb{P} \bigl\{ \omega \in \Omega \colon \Upsilon^\omega (A) = k \bigr\} = \mathrm{e}^{-\mu(A)} \frac{\mu (A)^k}{k!}, \quad k=1,2,\ldots 
\]
holds for each bounded Borel set $A \in \mathcal{B} (\mathbb{R})$
and for $A_1 , \ldots , A_n$ disjoint, $\Upsilon (A_1), \ldots , \Upsilon (A_n)$ are independent random variables.
\par 
Let $\Upsilon_n : \Omega \to \mathcal M_{\rm p}$, $n \in \mathbb N$, be a sequence of point processes defined on a probability space $(\Omega , \mathcal A , \mathbb P)$. This sequence is said to converge weakly to a point process $\Upsilon : \tilde \Omega \to \mathcal M_{\rm p}$ defined on a probability space $(\tilde \Omega , \tilde{\mathcal{A}} , \tilde{\mathbb{P}})$, if and only if for any bounded continuous function $\phi:\mathcal M_{\rm p} \to \mathbb R$ there holds
\[
 \lim_{n \to \infty} \int_\Omega \phi (\Upsilon_n^\omega) \mathbb{P} (\mathrm{d} \omega) = \int_{\tilde \Omega} \phi (\Upsilon^{\tilde{\omega}}) \tilde{\mathbb{P}} (\mathrm{d} \omega) .
\]
Let $\Sigma$ denote the almost sure spectrum of the (ergodic) family of operators $H_\omega$, $\omega\in \Omega$. Our main result of this section is
\begin{theorem} \label{theorem:poisson}
 Let Assumption \ref{ass:circulant} be satisfied, $I \subset \Sigma$ be a bounded interval and $E_0 \in I$. Assume that Assumption (FVC) is satisfied in $I$ and Assumption (Pos) is satisfied for $E_0$ and some $\kappa \in [0,1/(1+d))$. 

Then the point process $\Xi$, defined in Eq.~\eqref{eq:process}, converges for $L \to \infty$ weakly to a Poisson process on $\mathbb{R}$ with
Lebesgue measure as the intensity measure.
\end{theorem}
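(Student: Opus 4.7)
The plan is to deduce this theorem directly from the abstract Poisson-statistics result of Germinet and Klopp \cite{GerminetK-11b}, which is the reference cited in the introduction for precisely this purpose. That result, stated for discrete random Schrödinger operators, provides weak convergence of the rescaled spectrum $\Xi^\omega_{L,E_0}$ to a Poisson process with Lebesgue intensity, provided one has in the interval $I$: \textit{(i)} a Wegner estimate $\PP\{\sigma(H_{\omega,\Lambda})\cap J \neq \emptyset\}\le C_W |J||\Lambda|$ for subintervals $J\subset I$; \textit{(ii)} a (possibly weakened) Minami estimate on the probability of at least two eigenvalues in $J$; \textit{(iii)} Assumption~(Pos) at $E_0$ with an admissible exponent $\kappa$; and \textit{(iv)} an Anderson-localization hypothesis in $I$ strong enough to yield SUDEC/SULE-type decay of the eigenfunctions.

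I would then verify these four ingredients for our model. Item \textit{(ii)} is exactly Corollary~\ref{cor:minami}; it yields the bound $\PP\{\operatorname{Tr}\chi_J(H_{\omega,\Lambda})\ge 2\}\le \tfrac12 (\pi/\lambda)^2 C_{\mathrm{Min}} |J|^2|\Lambda|^2$, which is in the form used by \cite{GerminetK-11b}. Item \textit{(i)}, the Wegner estimate, follows under Assumption~\ref{ass:circulant} from the same probability-space transformation $\zeta = A_{\Lambda^+}\omega_{\Lambda^+}$ used in Section~\ref{sec:circulant}: since $\rho\in W^{2,1}(\RR)\subset L^\infty(\RR)$, the standard spectral-averaging argument applied in the new coordinates (as carried out in \cite{Veselic-10b, TautenhahnV-10}) produces the linear-in-$|J||\Lambda|$ bound with constant controlled by $C_u\|\rho\|_\infty$. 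Item \textit{(iii)} is exactly the hypothesis of the theorem. For item \textit{(iv)}, Assumption~(FVC) combined with the Wegner estimate from \textit{(i)} triggers a multiscale-analysis bootstrap (in its non-monotone variant, using \cite{ElgartTV-10,ElgartTV-11}), upgrading the single-scale finite-volume criterion to full exponential/dynamical localization in $I$; this is the bridge reported on in Section~\ref{s:Assumptions} of the paper, and it supplies the SUDEC-type input expected by \cite{GerminetK-11b}. With \textit{(i)}--\textit{(iv)} in place, the abstract theorem of \cite{GerminetK-11b} applies verbatim and delivers the weak convergence $\Xi^\omega_{L,E_0} \Rightarrow $ Poisson with Lebesgue intensity, where rescaling by $N$ rather than by the Lebesgue measure (as in Eq.~\eqref{eq:scaleev}) is exactly what compensates the weakened form of (Pos).

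The main obstacle in executing this plan is step \textit{(iv)}: although the FVC-to-dynamical-localization passage is classical for the standard i.i.d.\ Anderson model, here the random potential has sign-changing correlations, so one cannot invoke monotonicity-based multiscale analysis. This forces one either to transport the multiscale induction into the $\zeta$-variables via $A_{\Lambda^+}^{-1}$ (as done for Wegner and fractional-moment bounds in \cite{TautenhahnV-10,ElgartTV-11}) or to work with the non-monotone multiscale scheme directly. A secondary, purely bookkeeping difficulty is to verify the exponent $\kappa\in[0,1/(1+d))$: this range is the one for which the combination of the Minami bound (two powers of $|\Lambda|$) with the positivity exponent in (Pos) (affecting the $\epsilon$-scale on which $N$ has mass of order $1/|\Lambda|$) is still compatible with the thresholds fixed in \cite{GerminetK-11b}, and it has to be checked that our constants are indeed compatible with their framework, not that any new restriction arises from our alloy-type setting.
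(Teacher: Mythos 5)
Your overall strategy is the same as the paper's: invoke the abstract Poisson-statistics theorem of \cite{GerminetK-11b} (reproduced in the paper as Theorem~\ref{theorem:gk}), then verify its hypotheses for the discrete alloy-type model. You correctly locate the Wegner estimate in \cite{Veselic-10b}, the Minami estimate in Corollary~\ref{cor:minami}, and you recognize that (Pos) is taken as a hypothesis and that $\kappa\in[0,1/(1+d))$ is exactly the admissible range $\kappa<\beta/(1+d\beta)$ when $\beta=1$, which is the value Corollary~\ref{cor:minami} delivers (two factors of $|J|\,|\Lambda|$). Two remarks, one minor and one substantive.

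The minor point: you omit the independence-at-a-distance hypothesis (IAD). The paper checks it explicitly; it holds trivially here because $u$ has compact support, so $H_{\omega,\Lambda}$ and $H_{\omega,\Lambda'}$ depend on disjoint sets of coupling constants once $\operatorname{dist}(\Lambda,\Lambda')$ exceeds $\operatorname{diam}(\operatorname{supp}u)$. It costs one sentence but is part of the checklist.

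The substantive point: your item \textit{(iv)} is misdiagnosed as the ``main obstacle,'' and the concern about non-monotonicity there is misplaced. The abstract Theorem~\ref{theorem:gk} takes (FVC) \emph{directly} as a hypothesis; it does not ask you to supply SUDEC/SULE decay externally. The upgrade from (FVC) plus (W) plus (IAD) to full dynamical localization (and hence to the eigenfunction-decay inputs used internally in \cite{GerminetK-11b}) is performed inside that framework, see \cite{GerminetK-04} or \cite[Theorem 6.1]{GerminetK-11b}, and that bootstrap multiscale analysis is structure-agnostic: it uses only the Wegner estimate, independence at a distance, and the initial finite-volume estimate, none of which invoke monotonicity in $\omega$. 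The non-monotone character of $V_\omega$ is genuinely an issue when one tries to \emph{establish} (W) and (FVC), which is why the paper devotes Section~\ref{s:Assumptions} to cataloguing which regimes are currently known to satisfy (FVC). But Theorem~\ref{theorem:poisson} simply \emph{assumes} (FVC), and given that, no non-monotone variant of the multiscale induction needs to be run; the passage from (FVC) to the conclusion is entirely abstract. So the proof is in fact a routine verification of the four hypotheses of Theorem~\ref{theorem:gk}, with the only genuinely new ingredient being Corollary~\ref{cor:minami}.
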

As discussed in Section~\ref{s:Assumptions}, see e.g.\ \cite{ElgartSS-12} for a specific result, Assumption (FVC) is satisfied in $\RR$ if the disorder parameter $\lambda$ in the model \eqref{eq:hamiltonian} is sufficiently large. Thus we obtain the following corollary from Theorem~\ref{theorem:poisson}.
\begin{corollary}
Let Assumption \ref{ass:circulant} be satisfied. Then there exists $\lambda_0 < \infty$ such that for $\lambda \geq \lambda_0$ and for all $E_0 \in \Sigma$ where Assumption (Pos) is satisfied for some $\kappa \in [0,1/(1+d))$, the point process $\Xi$ defined in Eq.~\eqref{eq:process}, converges weakly for $L \to \infty$ to a Poisson process on $\mathbb{R}$ with
Lebesgue measure as the intensity measure.
\end{corollary}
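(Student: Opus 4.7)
The plan is to reduce the corollary to an immediate application of Theorem~\ref{theorem:poisson}. Assumption~\ref{ass:circulant} and Assumption (Pos) at $E_0$ (for some $\kappa \in [0,1/(1+d))$) are given by hypothesis, so the only remaining condition to verify is Assumption (FVC) on a bounded interval $I \subset \Sigma$ with $E_0 \in I$.

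First I would invoke a large-disorder localization result for non-monotone discrete alloy-type models satisfying Assumption~\ref{ass:circulant}, in particular the fractional-moment estimates of \cite{ElgartSS-12} to which Section~\ref{s:Assumptions} refers. These supply a threshold $\lambda_0 < \infty$ and constants $s \in (0,1)$, $\mu > 0$, $C < \infty$ such that for every $\lambda \geq \lambda_0$, every $E \in \RR$, every finite box $\Lambda_{L,0}$, and all $x,y \in \Lambda_{L,0}$,
\[
\EE\bigl(|G_{\omega,\Lambda_{L,0}}(E;x,y)|^s\bigr) \leq C\, \mathrm{e}^{-\mu |x-y|_\infty}.
\]
From this I would derive Assumption (FVC) by the standard Chebyshev-plus-union-bound argument. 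Fixing $\Theta > 3d-1$, for any $x,y$ with $|x-y|_\infty \geq L/2$,
\[
\PP\bigl\{|G_{\omega,\Lambda_{L,0}}(E;x,y)| > L^{-\Theta}\bigr\} \leq L^{s\Theta}\, \EE\bigl(|G_{\omega,\Lambda_{L,0}}(E;x,y)|^s\bigr) \leq C L^{s\Theta} \mathrm{e}^{-\mu L/2}.
\]
Summing over the at most $(2L+1)^{2d}$ such pairs yields a bound of order $L^{2d + s\Theta}\, \mathrm{e}^{-\mu L/2}$, which tends to zero. Hence the event in Assumption (FVC) has probability tending to $1$ for every $E \in \RR$, and in particular throughout any bounded interval $I \subset \Sigma$ containing $E_0$.

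With Assumption~\ref{ass:circulant}, Assumption (FVC) on $I$, and Assumption (Pos) at $E_0$ all verified, Theorem~\ref{theorem:poisson} delivers the desired weak convergence of $\Xi$ to a Poisson process with Lebesgue intensity. The only delicate point is that the cited localization input must be of fractional-moment type (or otherwise directly supply exponential Green function decay) for the non-monotone alloy-type setting covered by Assumption~\ref{ass:circulant}; mere polynomial decay would in principle have to be compared with the exponent $\Theta > 3d-1$ required by (FVC), but exponential decay absorbs every polynomial prefactor, so no genuine obstacle arises once the appropriate large-disorder fractional-moment result is in hand.
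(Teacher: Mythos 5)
Your proposal is correct and follows the paper's own route: the paper obtains the corollary by noting that \cite{ElgartSS-12} provides a large-disorder fractional moment bound \eqref{eq:fmb}, that this bound yields Assumption (FVC) via the Chebyshev-plus-union-bound argument recorded in the lemma ``(FMB) implies (FVC)'' of Section~\ref{s:Assumptions}, and then applying Theorem~\ref{theorem:poisson}. You spell out the Chebyshev/union-bound computation (which the paper compresses into one sentence), and you correctly identify that only (FVC) needs to be supplied on top of the hypotheses already present.
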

Theorem~\ref{theorem:poisson} will follow from the Minami estimate provided in Section~\ref{sec:circulant}, the Wegner estimate of \cite{Veselic-10b}, 
and the abstract result of  \cite{GerminetK-11b}. Thus, for the proof of Theorem~\ref{theorem:poisson} it will be necessary to 
recall, in an adapted form, Theorem~1.9 of \cite{GerminetK-11b}. It is stated there for an abstract class of random Hamiltonians
 on the lattice including the discrete alloy-type model
studied in this note as a special case. In order to do so, we introduce the following assumptions. 
\begin{assumption_w} \label{ass:wegner}
 Let $I \subset \mathbb{R}$. We say that Assumption (W) is satisfied in $I$ if there exists a constant $C_{\rm W} > 0$ such that for any bounded interval $J \subset I$ and any $L \in \mathbb{N}$ one has
\[
 \mathbb{E} \bigl( \operatorname{Tr} \chi_{J} (H_{\omega , \Lambda_L}) \bigr) \leq C_{\rm W} \lvert J \rvert \lvert \Lambda_L \rvert .
\]
\end{assumption_w}
\begin{assumption_m} \label{ass:minami}
Let $I \subset \mathbb{R}$. We say that Assumption (M) is satisfied in $I$ if there exist constants $C_{\rm Min},\beta > 0$ such that for any bounded interval $J \subset I$ and any $L \in \mathbb{N}$ one has
\[
\mathbb{E} \bigl( (\operatorname{Tr} \chi_{J} (H_{\omega , \Lambda_L}))^2  - \operatorname{Tr} \chi_{J} (H_{\omega , \Lambda_L}) \bigr) 
\leq C_{\rm Min} (\lvert J \rvert \, \lvert \Lambda_L \rvert)^{1+\beta} .
\] 
\end{assumption_m}
\begin{assumption_iad} \label{ass:iad}
 Let $\Lambda , \Lambda' \subset \mathbb{Z}^d$ finite. There exists $R_0 > 0$ such that if $\operatorname{dist}(\Lambda , \Lambda') > R_0$, 
then the random Hamiltonians $H_{\omega , \Lambda}$ and $H_{\omega , \Lambda'}$  are independent random variables. 
\end{assumption_iad}
Now we are in the position to formulate the criterion of \cite{GerminetK-11b}:
\begin{theorem}\label{theorem:gk}
Let $I \subset \Sigma$ be a bounded interval and assume that (IAD), (W), (M) and (FVC) are satisfied in $I$. Pick some $\kappa $ such that 
\[
 0 \leq \kappa < \frac{\beta}{1 + d \beta}
\]
where $\beta$ is defined by (M). Pick $E_0 \in I$ and assume that Assumption (Pos) is satisfied for $E_0$ and $\kappa $.

Then the point process $\Xi$ defined in Eq.~\eqref{eq:process} converges weakly, as $L \to \infty$, 
to a Poisson process on $\mathbb{R}$ with intensity measure equal to the Lebesgue measure.
\end{theorem}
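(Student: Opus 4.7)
The plan is to reduce the eigenvalue counting problem on $\Lambda_L$ to a sum of independent Bernoulli-type contributions from disjoint sub-boxes, and then invoke a classical Poisson convergence criterion for triangular arrays of independent point processes. First I would fix an intermediate length $\ell = \ell_L$ with $\ell \to \infty$ and $\ell/L \to 0$, and partition $\Lambda_L$ (up to a negligible boundary layer) into $N_L \sim (L/\ell)^d$ translated cubes $\Lambda_\ell^{(p)}$ separated by corridors of width larger than $R_0$. By (IAD) the restricted Hamiltonians $H_{\omega,\Lambda_\ell^{(p)}}$ are then mutually independent, so the point process $\Xi'^{\omega} := \sum_p \Xi^{(p),\omega}$ built from the rescaled eigenvalues of the blocks is a sum of independent random measures.

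The next step is to show that $\Xi^\omega_{L,E_0}$ and $\Xi'^\omega$ have the same limit in distribution as $L\to\infty$. Using the finite-volume criterion (FVC) together with (W), one argues that with probability tending to one every eigenfunction of $H_{\omega,\Lambda_L}$ with eigenvalue in a shrinking window of width $T/\lvert\Lambda_L\rvert$ around $E_0$ is exponentially concentrated near a center of localization, hence essentially supported in one block $\Lambda_\ell^{(p)}$; conversely a block eigenfunction is glued to an approximate eigenfunction of $H_{\omega,\Lambda_L}$ via a geometric resolvent identity, the error being controlled by the polynomial decay in (FVC). Combined with a Minami-type bound to rule out accidental near-degeneracies across blocks, this yields a bijective matching of the two spectra in the relevant window, up to an error which vanishes after rescaling by $N$.

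With these reductions in hand I would conclude by Kallenberg's theorem: $\Xi'^{\omega}$ converges weakly to a Poisson process of intensity $\mathrm{d}x$ provided that for every bounded Borel set $A\subset\mathbb{R}$ we verify (i) $\sum_p \mathbb{P}\{\Xi^{(p),\omega}(A)\geq 1\}\to \lvert A\rvert$ and (ii) $\sum_p \mathbb{P}\{\Xi^{(p),\omega}(A)\geq 2\}\to 0$. For (ii) the Minami hypothesis (M), applied through the Chebyshev step of Corollary~\ref{cor:minami} to each block, gives a per-block bound of order $(\lvert A\rvert\,\ell^d/\lvert\Lambda_L\rvert)^{1+\beta}$; summing over the $N_L$ blocks yields a total of order $\lvert A\rvert^{1+\beta}\,L^{-d\beta}\ell^{d\beta}$, which tends to zero whenever $\ell\ll L$. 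For (i) the Wegner bound (W) provides an upper estimate on $\mathbb{E}\,\Xi^{(p),\omega}(A)$, while (Pos) with parameter $\kappa$ provides a matching lower bound; the $N$-scaling in the definition \eqref{eq:scaleev} is precisely designed so that the total expected intensity converges to Lebesgue measure. The compatibility condition $\kappa<\beta/(1+d\beta)$ is exactly what allows the simultaneous choice of $\ell$: the (Pos) slack requires $\ell^d$ to be at least of order $\lvert\Lambda_L\rvert^{\kappa}$ (so that the $\epsilon^{1+\kappa}$ lower bound covers at least one block's worth of eigenvalue density), while the Minami side requires $\ell^{d\beta}\ll L^{d\beta}$; balancing both gives the stated range for $\kappa$.

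The main obstacle is the spectral approximation performed in the second step. FVC is a single-energy statement about off-diagonal Green function decay, and pushing it to a genuine bijective correspondence of eigenvalues across scales $\ell\ll L$ in a window of width $O(\lvert\Lambda_L\rvert^{-1})$ requires a delicate combination of geometric resolvent identities, the Wegner estimate (to control how many eigenvalues live in the window) and the Minami estimate (to forbid pairs of near-coincident eigenvalues in a single block that would spoil the matching). Once this patching step is in place, the remainder of the argument is a fairly robust application of independence plus the Kallenberg/Grigelionis criterion.
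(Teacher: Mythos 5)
First, a point of comparison: the paper does not prove Theorem~\ref{theorem:gk} at all --- it is an adapted restatement of Theorem~1.9 of \cite{GerminetK-11b} and is used as a black box; the only proof given in the paper is the verification of its hypotheses (IAD), (W), (M) for the discrete alloy-type model. So what you are attempting is a reproof of the cited Germinet--Klopp result, and your sketch does follow the standard strategy of that proof (and of Minami's original argument): decompose $\Lambda_L$ into order $(L/\ell)^d$ boxes made independent by (IAD), match the spectrum of $H_{\omega,\Lambda_L}$ near $E_0$ with the union of the block spectra using localization, and conclude via the Kallenberg/Grigelionis criterion, with (M) killing the probability of two points per block and the IDS-rescaling \eqref{eq:scaleev} together with (Pos) fixing the intensity.

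As a proof, however, there is a genuine gap, and it sits exactly where you locate it. The reduction of $\Xi^\omega_{L,E_0}$ to the independent superposition $\Xi'^\omega$ is the heart of the matter: one must convert the single-energy, polynomial-decay statement (FVC) into a two-sided correspondence between eigenvalues of $H_{\omega,\Lambda_L}$ and of the blocks, uniformly over an energy window whose width is \emph{not} $T/\lvert\Lambda_L\rvert$ but rather of order $\lvert\Lambda_L\rvert^{-1/(1+\kappa)}$ (the window is obtained by inverting $N$ via (Pos), which is precisely why $\kappa$ enters), with errors small compared to the eigenvalue spacing, and with control of how many eigenvalues can be mis-matched. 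This requires bootstrapping (FVC)+(W) into localization estimates valid at \emph{all} energies of the window simultaneously, with probabilities quantified well enough to survive multiplication by the number of blocks; in \cite{GerminetK-11b} this occupies the bulk of the proof and is where the exponent $\Theta>3d-1$ and the precise threshold $\kappa<\beta/(1+d\beta)$ actually come from. Your heuristic balancing (``$\ell^d\gtrsim\lvert\Lambda_L\rvert^{\kappa}$'' versus ``$\ell\ll L$'') does not reproduce that threshold: with window width $\lvert\Lambda_L\rvert^{-1/(1+\kappa)}$ and $\ell=L^{\alpha}$, summing the per-block Minami bound gives a condition that degenerates to $\kappa<\beta$ as $\alpha\to 0$, so the restrictive constant $1+d\beta$ must come from the lower bounds on $\ell$ forced by the matching step, which you have not made quantitative. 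In short, the architecture is right and matches the cited proof, but the decisive technical step is described rather than proved, so the argument as it stands does not establish the theorem; for the purposes of this paper the correct move is the one the authors make, namely to invoke \cite{GerminetK-11b}.
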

\begin{proof}[Proof of Theorem~\ref{theorem:poisson}]
We have only to verify the hypothesis of Theorem~\ref{theorem:gk}. Assumption (IAD) is satisfied since $u$ has bounded support. A Wegner estimate (W) was proven for $I = \mathbb{R}$ under Assumption \ref{ass:circulant} in \cite{Veselic-10b}. 
Minami's estimate (M) holds for $I = \mathbb{R}$ and $\beta = 1$ due to Corollary~\ref{cor:minami}.
\end{proof}
\begin{remark}
Assuming (W), (M), (FVC) and (IAD) for a general ergodic Schr\"odinger operator on $\ell^2(\ZZ^d)$, 
the paper \cite{GerminetK-11b} presents a plethora of more precise results on the rescaled eigenvalue statistics and even on the joint statistics of rescaled eigenvalues and localization centers.
Since we have all these assumptions for the discrete alloy-type model given by Eq.~\eqref{eq:hamiltonian}, all these abstract results apply.
\par
For the standard Anderson model related results have been obtained 
already in \cite{Nakano-07}.
\end{remark}
\section{Discussion of the Assumptions (FVC) and (Pos)}
\label{s:Assumptions}
First we discuss Assumption (FVC) in some detail. This is necessary,
since our assumptions cover some non-monotone models as well ($u$ may change its sign), 
where a satisfactory understanding of localization poses certain challenges not encountered
in the standard Anderson model with $u = \delta_0$. 
Roughly speaking, Assumption (FVC) for a certain interval $I \subset \mathbb{R}$ corresponds
to the fact that Anderson localization holds in a dynamical sense in the interval $I$. 
More precisely, if the Wegner estimate (W) holds in $I \subset \mathbb R$, 
then (FVC) is equivalent to certain dynamical localization properties
which ensure that the solutions of the Schr\"odinger equation stay trapped in a finite region of space for all time almost surely,
see \cite{GerminetK-04} or \cite[Theorem~6.1]{GerminetK-11b}. 
In particular, these localization properties imply that the continuous spectrum of $H_\omega$ is empty for $\mathbb P$-almost all $\omega \in \Omega$. 
Note that a Wegner estimate (W) is available for the discrete alloy-type model studied in this note once  Assumption \ref{ass:circulant} is satisfied, see \cite{Veselic-10b}.
\par
In the multidimensional setting there are two methods available to prove Anderson localization and hence to verify Assumption (FVC); the multiscale analysis \cite{FroehlichS-83,FroehlichMSS-85} and the fractional moment method \cite{AizenmanM-93,Aizenman-94,Graf-94}. 
\par
In the setting of the multiscale analysis Assumption (FVC) corresponds to the so-called initial scale estimate. 
Roughly speaking, the multiscale analysis implies localization in any energy region $I \subset \mathbb{R}$ where a Wegner estimate and the initial length scale estimate hold. 
Note that the initial length scale estimate may be deduced 
from Lifshitz tails, or 
from a Wegner estimate in the case of sufficiently large disorder \cite{Veselic-10a}. The latter fact is specific for random Schr\"odinger operators on the lattice.
Let us name a few results where the multiscale analysis was applied in certain energy/disorder regimes for a localization proof for the discrete alloy-type model. 
With a certain energy/disorder regime we mean either $I = \mathbb R$ in the case of sufficiently large disorder (large disorder) 
or intervals around edges of the almost sure spectrum for arbitrary disorder $\lambda > 0$ (spectral extrema).
\begin{description}
 \item[\cite{FroehlichS-83,FroehlichMSS-85,DreifusK-89}] consider the case $u = \delta_0$ and $\rho$ bounded and compactly supported (large disorder and spectral extrema).
 \item[\cite{KirschSS-98b}] consider non-negative $u$ with $u(x)  \leq C \lvert x \rvert^{-m}$ for some $m>4d$. 
They consider the continuous alloy-type model, but the results can transferred to the discrete alloy-type model with non negative $u$, cf.~\cite{LeonhardtPTV}, (spectral extrema).
\end{description}
The model studied in this note allows the single-site potential to change its sign, while the above results apply to non-negative single site potentials only. 
If the single site potential changes its sign, certain properties of $H_\omega$ depend in a non-monotonic way on the random parameters. For $u$ with changing sign much less is known. 
Most proofs of the Wegner estimate and the initial length scale estimate for non-negative $u$ do not apply in the general setting. Let us list some results for sign-changing single-site potentials.
\begin{description}
\item[\cite{Klopp-95a,HislopK-02}] prove a Wegner estimate for the continuous alloy-type model near the minimum of the spectrum  
and show that if the IDS exhibits a Lifshitz-tail behavior one can conclude localization (spectral extrema).
 \item[\cite{Veselic-01,Veselic-02a}]  prove a Wegner estimate for the continuous and discrete alloy-type model under condition \eqref{eq:small} 
    and an initial length scale estimate, roughly speaking, if $u$ has only a small negative part (spectral extrema).
\item[\cite{Klopp-02c}] proves localization in the weak disorder regime for the continuum alloy type model if the mean value of the single site potential does not vanish
(spectral extrema).
\item[\cite{Veselic-10a,Veselic-10b}] prove a Wegner estimate for the discrete and the continuous alloy-type model under various conditions on the alloy-type potential
\item[\cite{KloppN-10}] prove Lifshitz tails for generalized alloy-type models in the continuum, which imply the finite volume criterion (FVC). 
 This is not applicable for the lattice models we are considering here.
\item[\cite{Krueger-12}] proves localization using an enhanced version of the multiscale analysis \`{a} la Bourgain for
the discrete alloy-type model  with single site potentials $u$ with exponential decay at strong disorder (large disorder).  

\item[\cite{CaoE-12}] prove localization in the lattice case when $d=3$, $u$ is of compact support, and the density $\rho$ 
is bounded compactly supported and H\"older continuous (spectral extrema). Moreover, they provide quantitative bounds on the localization regimes.
\item[\cite{PeyerimhoffTV-11,LeonhardtPTV}] consider (possibly sign-changing) single-site potentials $u$ with exponential decay. 
The first paper establishes Wegner estimates, the second a finite volume criterion both for the discrete and continuum alloy type model.
\end{description}
\par
Let us now turn to a discussion of results via the fractional moment method which can be used to verify Assumption (FVC). The typical output of the fractional moment method is the so-called fractional moment bound (FMB) as formulated in Ineq.~\eqref{eq:fmb}. Note that the the (FMB) for $x=y$ implies a Wegner estimate, see \cite{ElgartTV-10}. The next lemma shows that the (FMB) also implies Assumption (FVC). 
\begin{lemma}[(FMB) implies (FVC)]
Let $I \subset \mathbb{R}$ and assume that there exist constants $s\in (0,1) $ and $A, \gamma \in (0,\infty)$ 
such that for all $\Gamma \subset \mathbb{Z}^d$, $E \in I$, $\epsilon > 0$ and $x,y \in \Gamma$ we have
\begin{equation} \label{eq:fmb} 
\mathbb{E} \bigl\{\lvert G_{\omega , \Gamma} (E + \mathrm{i} \epsilon;x,y)\rvert^{s} \bigr\}\leq A \mathrm{e}^{-\gamma|x-y|_\infty} .
\end{equation}
Then Assumption (FVC) is satisfied in $I$.
\end{lemma}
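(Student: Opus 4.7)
The plan is to combine three routine ingredients: absolute continuity of the single-site distribution to ensure that a real energy $E$ is almost surely in the resolvent set of the finite matrix $H_{\omega,\Lambda_L}$, Fatou's lemma to transfer (FMB) from complex to real energies, and a Markov-plus-union-bound argument. The exponential decay in $|x-y|_\infty$ will comfortably dominate all polynomial factors in $L$, so the specific value $3d-1$ plays no quantitative role.

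First I would fix $E \in I$ and $L \in \mathbb{N}$. Since $\rho \in W^{2,1}(\mathbb{R})$ is in particular absolutely continuous, each $\omega_k$ has a continuous distribution, and hence for the finite matrix $H_{\omega,\Lambda_L}$ the event $\{E \in \sigma(H_{\omega,\Lambda_L})\}$ -- defined by the vanishing of a polynomial in finitely many $\omega_k$ -- has $\mathbb{P}$-probability zero. On the complementary full-measure set the Green function $G_{\omega,\Lambda_L}(E;x,y)$ is well defined and coincides with $\lim_{\epsilon \downarrow 0} G_{\omega,\Lambda_L}(E+\mathrm{i}\epsilon;x,y)$. Applying Fatou's lemma to the hypothesis (FMB) then yields
\begin{equation*}
\mathbb{E}\bigl\{|G_{\omega,\Lambda_L}(E;x,y)|^s\bigr\} \leq A\,\mathrm{e}^{-\gamma|x-y|_\infty}
\qquad \text{for all } x,y \in \Lambda_L.
\end{equation*}

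Second, Markov's inequality gives, for any $\Theta > 0$,
\begin{equation*}
\mathbb{P}\bigl\{|G_{\omega,\Lambda_L}(E;x,y)| > L^{-\Theta}\bigr\} \leq A\, L^{s\Theta}\, \mathrm{e}^{-\gamma|x-y|_\infty},
\end{equation*}
which, for pairs with $|x-y|_\infty \geq L/2$, is at most $A\, L^{s\Theta}\, \mathrm{e}^{-\gamma L/2}$. A union bound over the at most $(2L+1)^{2d} \leq C_d\, L^{2d}$ such ordered pairs yields
\begin{equation*}
\mathbb{P}\bigl\{\exists\, x,y \in \Lambda_L,\ |x-y|_\infty \geq L/2:\ |G_{\omega,\Lambda_L}(E;x,y)| > L^{-\Theta}\bigr\}
\leq C_d A\, L^{2d+s\Theta}\, \mathrm{e}^{-\gamma L/2}.
\end{equation*}
The right-hand side tends to $0$ as $L \to \infty$ for every fixed $\Theta$, so choosing any $\Theta > 3d-1$ (say $\Theta = 3d$) the complementary probability tends to one, which is precisely Assumption (FVC).

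The only genuinely non-mechanical point is the first step: passing from $\epsilon > 0$ to the real axis. This is the reason we need absolute continuity of $\mu$ (to ensure $E \notin \sigma(H_{\omega,\Lambda_L})$ almost surely) together with Fatou's lemma (to preserve the bound in the limit $\epsilon \downarrow 0$). Once this is in place, the remaining Chebyshev/union-bound argument is independent of $\Theta$, so the condition $\Theta > 3d-1$ is satisfied for free.
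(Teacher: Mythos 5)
Your proof is correct and follows exactly the route the paper indicates in its one-line proof (Chebyshev's inequality plus sub-additivity/union bound). You additionally spell out the bridge from $E+\mathrm{i}\epsilon$ to real $E$ via absolute continuity of $\mu$ and Fatou's lemma, a step the paper leaves implicit but which is indeed needed and handled correctly here.
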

\begin{proof}
 The result follows from an application of the sub-additivity of the probability measure and Chebyshev's inequality.
\end{proof}
The fractional moment bound as described in Ineq.~\eqref{eq:fmb} has been verified for a large class of random operators, either for $I=\mathbb{R}$ in the case of sufficiently large $\lambda$ (large disorder) or at the band edges for arbitrary disorder $\lambda > 0$ (spectral extrema). The following list shows exemplary some results where Ineq.~\eqref{eq:fmb} has been proven for the discrete alloy-type model studied in this note. 
\begin{description}
 \item[\cite{AizenmanM-93,Aizenman-94,Graf-94}] consider the case $u = \delta_0$ and $\rho$ bounded and compactly supported (large disorder and spectral extrema).
\item[\cite{AizenmanENSS-06}] consider $u \geq 0$ compactly supported and $\rho$ bounded and compactly supported. They consider the continuum alloy-type model (large disorder and spectral extrema).
\end{description}
Just as in the multiscale analysis, the existing proofs of Ineq.~\eqref{eq:fmb} for non-negative $u$ are not directly applicable if the single-site potential changes its sign. 
For this reason, the fractional moment bound has been established more recently in the non-monotone case.
\begin{description}
\item[\cite{ElgartTV-11}] consider compactly supported $u$ with fixed sign at the vertex-boundary of its support, and a bounded and compactly supported densities $\rho$ (large disorder).
\item[\cite{ElgartSS-12}] assume a covering condition on the compactly supported single-site potential $u\colon \ZZ^d\to \RR$ 
and that the measure $\mu$ is $\tau$-regular and has a finite $q$-moment for some $q > 0$ (large disorder).
\end{description}
\par
Let us now discuss the positivity Assumption (Pos) on the IDS from Theorem~\ref{thm:result1}. 
First of all it is noteworthy that if $N$ is differentiable at $E_0$ and its derivative $n(E_0)$ is positive, then Assumption (Pos) is satisfied for $E_0$ and $\kappa = 0$, 
and one recovers from Theorem~\ref{theorem:poisson} the classical result of \cite{Minami-96} on Poisson statistics, see \cite{GerminetK-11b}. 
Indeed, since we have an optimal Wegner estimate \cite{Veselic-10b} under Assumption \ref{ass:circulant}, the IDS is Lipschitz continuous and hence its derivative, 
the DOS, exists for almost all $E \in \mathbb{R}$. 
The positivity of the DOS is known for the i.i.d.\ Anderson model where $u = \delta_0$. 
\par
The first derivation of the finiteness and positivity of the DOS
was given by Wegner in \cite{Wegner-81}. The argument is on the physical level of rigor, 
but many ideas of the paper have been later given a mathematical justification. At the time of the publication of \cite{Wegner-81} 
the physics community was speculating whether one can identify the mobility edge by some unusual behavior (either vanishing or divergence) of the DOS. These predictions have been discarded by 
\cite{Wegner-81}. 
\par
A precise mathematical justification for the positivity of the DOS 
was achieved in \cite{HislopM-08}, where the authors prove that if the density $\rho$ is essentially bounded away from zero on some interval $[W_- , W_+]$, then for every $\delta > 0$ small enough there exists a strictly positive constant $C_\delta$ such that $n(E) \geq C_\delta$  for Lebesgue-almost all $E \in [-2d + W_- + \delta , 2d + W_+ - \delta]$. A similar result was obtained earlier in the unpublished Ph.D.\ thesis \cite{Jeske-92} supervised by Werner Kirsch. 
For the discrete alloy-type model with a non-trivial single-site potential it is an open question under which conditions on the single-site potential $u$ an the measure $\mu$ 
one can verify the positivity condition (Pos) on the density of states measure.
\paragraph{Acknowledgment} Our work was stimulated by the talks at and discussions around the workshop
\emph{Mathematiques des systeme quantiques desordonnes} organized by H. Boumaza and F.~Klopp
in May 2012 in Paris. The authors are grateful to J.-M.~Combes, F.~Germinet, P.~Hislop,
A.~Klein, F.~Klopp, and F.~Nakano for comments received in response to the transmission of this preprint.
F.~Klopp has furthermore informed us that he has related results to ours. M.T. would like to thank Wilfried Weinelt for stimulating discussions.
\appendix
\section{An explicit Sobolev imbedding}
First we prove an inequality which was used in the proof of Theorem~\ref{thm:result1}.
If $\alpha = (\alpha_1 , \alpha_2)$ is an $2$-tupel of non-negative integers, we denote by 
\[
D^\alpha = \frac{\partial^{\alpha_1}}{x_1^{\alpha_1}} \frac{\partial^{\alpha_2}}{x_1^{\alpha_2}}
\]
the $\alpha$-th weak derivative.
\begin{lemma} \label{lemma:sobolev}
 Let $f \in W^{2,1} (\mathbb{R}^2)$. Then $f \in L^{\infty} (\mathbb{R}^2)$ and
\begin{equation} \label{eq:sobolev}
 \lVert f \rVert_\infty \leq \frac{1}{4} \lVert D^{(1,1)} f \rVert_1 .
\end{equation}
\end{lemma}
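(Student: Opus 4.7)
The plan is to reduce to the case of smooth compactly supported $f$ by a standard density argument, and for such $f$ to exploit the fundamental theorem of calculus twice, once in each variable, together with a four-quadrant averaging trick which is what produces the factor $\tfrac{1}{4}$.

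First I would fix $f \in C_c^\infty(\mathbb{R}^2)$ and a point $(x,y) \in \mathbb{R}^2$. Writing $g = D^{(1,1)} f = \partial_1 \partial_2 f$, the fundamental theorem of calculus applied twice gives
\[
f(x,y) = \int_{-\infty}^{x} \!\! \int_{-\infty}^{y} g(s,t)\, dt\, ds.
\]
Since $f$ has compact support, integrating $g$ over the strip $\{s < \infty,\, t < y\}$ produces $\lim_{x\to\infty} f(x,y) = 0$, and analogously for the other three infinite strips. Consequently, using the sign of the corresponding quadrant, one obtains the three additional identities
\[
f(x,y) = -\!\int_{x}^{\infty}\!\!\int_{-\infty}^{y} g,\quad f(x,y) = -\!\int_{-\infty}^{x}\!\!\int_{y}^{\infty} g,\quad f(x,y) = \int_{x}^{\infty}\!\!\int_{y}^{\infty} g.
\]
Summing the four representations, the integrand becomes $\mathrm{sgn}(s-x)\mathrm{sgn}(t-y)\, g(s,t)$, whose modulus equals $|g(s,t)|$ a.e. Therefore
\[
4|f(x,y)| \leq \int_{\mathbb{R}^2} |g(s,t)| \, ds\, dt = \lVert D^{(1,1)} f \rVert_1,
\]
and taking the supremum over $(x,y)$ yields \eqref{eq:sobolev} for $f \in C_c^\infty(\mathbb{R}^2)$.

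To pass to general $f \in W^{2,1}(\mathbb{R}^2)$, I would pick a sequence $(f_n) \subset C_c^\infty(\mathbb{R}^2)$ converging to $f$ in the $W^{2,1}$-norm (which exists by the density of test functions in $W^{2,1}$ on $\mathbb{R}^2$; this can be done by mollification combined with a smooth cut-off). Applying the inequality from the previous step to the differences $f_n - f_m$ shows that $(f_n)$ is Cauchy in $L^\infty(\mathbb{R}^2)$, hence converges uniformly to a continuous limit, which must coincide almost everywhere with $f$. Passing to the limit in the inequality $\lVert f_n \rVert_\infty \leq \tfrac{1}{4} \lVert D^{(1,1)} f_n \rVert_1$ gives the desired bound for $f$, and establishes $f \in L^\infty(\mathbb{R}^2)$ simultaneously.

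The main conceptual step is the four-quadrant averaging that turns a trivial bound with constant $1$ into one with constant $\tfrac{1}{4}$; the density/approximation step is routine but must be handled carefully to ensure that the pointwise representative of $f$ inherits the $L^\infty$ bound, and that the compact-support assumption really can be removed (the vanishing of $f$ at infinity used to obtain the three alternative representations is the only place where compact support of the approximants enters, which is why the approximation has to take place in the $W^{2,1}$-norm rather than merely in $L^1$).
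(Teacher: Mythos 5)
Your proof is correct and uses essentially the same argument as the paper: four applications of the fundamental theorem of calculus over the four quadrants, combined to produce the factor $\tfrac14$, followed by density of smooth compactly supported functions in $W^{2,1}(\mathbb{R}^2)$. The only (minor, and arguably cleaner) difference is in the limit passage: the paper invokes the abstract Sobolev embedding $W^{2,1}\hookrightarrow L^\infty$ to conclude $f_n\to f$ in $L^\infty$, whereas you derive the $L^\infty$-Cauchy property directly from the inequality already proved for the approximants, making the argument self-contained.
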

\begin{proof} 
The Sobolev imbedding theorem \cite[Theorem 4.12]{Adams-75} tells us that $W^{2,1} (\mathbb{R}^2) \hookrightarrow L^{\infty} (\mathbb{R}^2)$, i.e.\ $W^{2,1} (\mathbb R^2) \subset L^\infty (\mathbb{R}^2)$ and there exists a constant $c$ such that 
\[
\lVert  f \rVert_\infty \leq c \lVert f \rVert_{2,1} = c \sum_{\lvert \alpha \rvert \leq 2} \lVert \partial^\alpha u \rVert_{1} . 
\]
This gives us the first part of our lemma. To prove Ineq.~\eqref{eq:sobolev} we first consider functions $f \in C_{\rm c}^2 (\mathbb{R}^2) \subset W^{2,1} (\mathbb{R}^2)$. By the fundamental theorem of calculus we have for all $x = (x_1 , x_2) \in \mathbb{R}^2$ the four equalities
\begin{align*}
 f(x) &= \int_{-\infty}^{x_1} \int_{-\infty}^{x_2} D^{(1,1)} \! f (x_1' , x_2') \mathrm{d}x_2' \mathrm{d}x_1'
= -\int_{x_1}^{\infty} \int_{-\infty}^{x_2} \! D^{(1,1)} f (x_1' , x_2') \mathrm{d}x_2' \mathrm{d}x_1'  \\
&= -\int_{-\infty}^{x_1} \int_{x_2}^{\infty} \! D^{(1,1)} f (x_1' , x_2') \mathrm{d}x_2' \mathrm{d}x_1'
= \int_{x_1}^\infty \int_{x_2}^{\infty} \! D^{(1,1)} f (x_1' , x_2') \mathrm{d}x_2' \mathrm{d}x_1' .
\end{align*}
We take absolute value of each equality and apply the triangle inequality to get four inequalities. Adding up these inequalities we obtain
\[
 4 \lvert f(x) \rvert \leq \int_{\mathbb R} \int_{\mathbb R} \lvert D^{(1,1)} f(x_1 , x_2) \rvert \mathrm{d} x_2 \mathrm{d} x_1 .
\]
This proves 
\begin{equation} \label{eq:proof1}
\lVert f \rVert_\infty \leq \frac{1}{4} \lVert D^{(1,1)} f \rVert_1 \quad \text{for $f \in C_{\rm c}^2 (\mathbb{R}^2)$}. 
\end{equation}
Let now $f \in W^{2,1} (\mathbb R^2)$. Since $C_{\rm c}^2 (\mathbb R^2)$ is dense in $W^{2,1} (\mathbb R^2)$ there is a sequence $f_n \in C_{\rm c}^2 (\mathbb R^2)$, $n \in \mathbb N$, which converges to $f$ in $W^{2,1} (\mathbb{R}^2)$. The Sobolev imbedding theorem tells us that $f_n$ converges to $f$ in $L^{\infty} (\mathbb R^2)$. The result now follows by letting $n$ tend to infinity in Ineq.~\eqref{eq:proof1} with $f = f_n$.
\end{proof}

\end{document}